\theoremstyle{plain}
\newtheorem{theorem}{Theorem}[section]
\newtheorem{proposition}[theorem]{Proposition}
\newtheorem*{theorem*}{Theorem}
\newtheorem*{proposition*}{Proposition}
\newtheorem*{corollary*}{Corollary}
\newtheorem*{lemma*}{Lemma}
\newtheorem*{conjecture*}{Conjecture}
\theoremstyle{definition}
\newtheorem{question}{Problem}
\newtheorem*{definition*}{Definition}
\newtheorem*{example*}{Example}
\newtheorem*{question*}{Question}
\newtheorem*{philosophy*}{Philosophy}
\theoremstyle{remark}
\newtheorem{remark}[theorem]{Remark}
\newtheorem*{remark*}{Remark}
\def\p{\mathbf{p}}
\def\p{\mathbf{p}}
\begin{document}
\title{A general family of congruences for Bernoulli numbers}
\author{Julian Rosen}
\date{\today}
\maketitle

\begin{abstract}
We prove a general family of congruences for Bernoulli numbers whose index is a polynomial function of a prime, modulo a power of that prime. Our family generalizes many known results, including the von Staudt--Clausen theorem and Kummer's congruence.
\end{abstract}

\section{Introduction}

\subsection{Bernoulli numbers}

The Bernoulli numbers $B_n$ are rational numbers defined by the power series expansion
\[
\frac{x}{e^x-1}=\sum_{n\geq 0}B_n\frac{x^n}{n!}.
\]
The first few values are:
\renewcommand{\arraystretch}{1.2}
\[
\begin{array}{|c||c|c|c|c|c|c|c|c|c|c|c|c|c|}
\hline n&0&1&2&3&4&5&6&7&8&9&10&11&12\\\hline
B_n&1&-\frac{1}{2}&\frac{1}{6}&0&-\frac{1}{30}&0&\frac{1}{42}&0&-\frac{1}{30}&0&\frac{5}{66}&0&-\frac{691}{2730}\\\hline
\end{array}.
\]
Terms of odd index greater than $1$ vanish, and the non-zero terms alternate in sign.

The Bernoulli numbers are known to have interesting arithmetic properties. One well-known example is the von Staudt--Clausen Theorem, which says that for every even integer $n\geq 2$, the quantity
\[
B_n+\sum_{\substack{p\text{ prime,}\\p-1\mid n}}\frac{1}{p}
\]
is an integer. In particular, the denominator of $B_n$ is the product of those primes $p$ such that $p-1$ divides $n$.
Another well-known result is Kummer's congruence, which says that for all non-negative even integers $m$, $n$ not divisible by $p-1$, satisfying $m\equiv n\mod \varphi(p^k)$,
\begin{equation}
\label{eqkum}
\lp 1-p^{m-1}\rp\frac{B_m}{m}\equiv \lp 1-p^{n-1}\rp\frac{B_n}{n}\mod p^k.
\end{equation}
Here, a congruence between rational numbers modulo $p^k$ means the $p$-adic valuation of their difference is at least $k$ (that is, congruence modulo $p^k\Z_{(p)}$). Many generalizations of \eqref{eqkum} are known, some involving additional terms, some relaxing the restriction that $p-1\nmid m$. Several generalizations are given in \cite{Coh07} (\S9.5, \S11.4.2)

The results of von Staudt--Clausen and Kummer can be expressed in terms Bernoulli numbers whose index is a polynomial in $p$.
\begin{enumerate}
\item Let $f(t)\in\Z[t]$ have positive leading coefficient, and set $\delta=-1$ if $f(1)=0$, and $\delta=0$ if $f(1)\neq 0$. Then the von Staudt--Clausen theorem implies that
\begin{equation}
\label{vscpoly}
pB_{f(p)}\equiv\delta\mod p
\end{equation}
for every prime $p>|f(1)|$.
\item Let $f(t)$, $g(t)\in\Z[t]$ be distinct non-constant polynomials with positive leading coefficient, and suppose that $f(1)=g(1)\neq 0$. Let $k$ be the largest power of $t$ dividing $f(t)-g(t)$. Then Kummer's congruence implies that
\begin{equation}
\label{kpoly}
\frac{B_{f(p)}}{f(p)}\equiv \frac{B_{g(p)}}{g(p)}\mod p^{k+1}
\end{equation}
holds for every prime $p>|f(1)|$.

\end{enumerate}
\smallskip

\noindent Here we consider the following problem.

\begin{question}
Given polynomials $f_1(t),\ldots,f_n(t)\in\Z[t]$ with positive leading coefficients, rational functions $g_0(t),g_1(t),\ldots,g_n(t)\in\Q(t)$, and a non-negative integer $N$, determine under which circumstances the congruence
\begin{equation}
\label{bsc}
\sum_{i=1}^n g_i(p)B_{f_i(p)}\equiv g_0(p)\mod p^N
\end{equation}
holds for every sufficiently large prime $p$.
\end{question}

The polynomial form of the von Staudt--Clausen theorem \eqref{vscpoly} and Kummer's congruence \eqref{kpoly} are examples of \eqref{bsc}. Other examples are known. For instance, it is a result of Z.-H.\ Sun \cite{Sun00} that for integers $k$, $b$, with $b\not\equiv 0\mod p-1$ even and $k\geq 0$:
\begin{equation}
\label{s1}
\frac{B_{k(p-1)+b}}{k(p-1)+b}\equiv k\frac{B_{p-1+b}}{p-1+b}-(k-1)(1-p^{b-1})\frac{B_b}{b}\mod p^2,
\end{equation}
\begin{gather}
\frac{B_{k(p-1)+b}}{k(p-1)+b}\equiv {k\choose 2}\frac{B_{2(p-1)+b}}{2(p-1)+b}-k(k-2)\frac{B_{p-1+b}}{p-1+b}\\
\hspace{40mm}+{k-1\choose 2}(1-p^{b-1})\frac{B_b}{b}\mod p^3.\label{s2}
\end{gather}

\subsection{Results}
The main result of this paper is a criterion for \eqref{bsc} to hold.

\begin{theorem}
\label{thmain}
Fix an integer $N$, non-constant polynomials $f_1(t),\ldots,f_n(t)\in\Z[t]$ with positive leading coefficients, and rational functions $g_0(t),\ldots,g_n(t)\in\Q(t)$. Write $v_t$ for the $t$-adic valuation on $\Q(t)$, and set
\[
M=\min\{v_t(g_i(t))\}.
\]
Then the congruence
\begin{equation}
\label{bscth}
\sum_{i=1}^n g_i(p)B_{f_i(p)}\equiv g_0(p)\mod p^N
\end{equation}
holds for every sufficiently large prime $p$ if all of the following conditions hold.
\begin{enumerate}
\item \[
v_t\lp g_0(t)-\lp 1-\frac{1}{t}\rp\sum_{\substack{i\\f_i(1)=0}}g_i(t)
-\sum_{\substack{i\\f_i(1)\geq 2}}\lp 1-t^{f_i(1)-1}\rp \frac{B_{f_i(1)}}{f_i(1)}g_i(t)f_i(t)
\rp\geq N.
\]
\item For every even, non-positive integer $k\in\{f_i(1)\}$ and every $1\leq m\leq N-M$,
\[
v_t\lp\sum_{\substack{i\\f_i(1)= k}} g_i(t)f_i(t)^m\rp\geq N+1-m.
\]
\item For every even, positive integer $k\in\{f_i(1)\}$ and $2\leq m\leq N-M$,
\[
v_t\lp\sum_{\substack{i\\f_i(1)= k}} g_i(t)\big(f_i(t)^m-k^{m-1}f_i(t)\big)\rp\geq N+1-m.
\]
\end{enumerate}
\end{theorem}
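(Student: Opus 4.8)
\emph{Proof sketch.} The plan is to expand each $B_{f_i(p)}$ $p$-adically as an explicit polynomial in $f_i(p)$ whose coefficients have controlled $p$-adic valuation, then sum over $i$, regroup by the residue $f_i(1)$, and match against $g_0(p)$; conditions (1)--(3) are precisely the inequalities that make this matching work modulo $p^N$. First some reductions. Fix $p$ larger than every $|f_i(1)|$, than $N$, and than the numerators and denominators of the finitely many rationals occurring among the coefficients of the $g_i$ and among the $B_{f_i(1)}$, and let $p\to\infty$ at the end, discarding this finite set of bad primes. Since $f_i(p)\equiv f_i(1)\pmod{p-1}$ and $p-1$ is even, $f_i(p)$ has the same parity as $f_i(1)$; hence if $f_i(1)$ is odd then $f_i(p)$ is odd and $>1$, so $B_{f_i(p)}=0$. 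This is why (2) and (3) quantify only over even $k$. Writing $g_i(t)=t^{v_t(g_i)}u_i(t)$ with $u_i$ a unit at $t=0$, the value $u_i(p)$ is a $p$-adic unit for large $p$, so $v_p(g_i(p))=v_t(g_i)$; the same argument gives $v_p(G(p))=v_t(G(t))$ for every $G\in\Q(t)$ and all large $p$, so the hypotheses $v_t(\cdot)\geq N+1-m$ become $v_p(\cdot)\geq N+1-m$. Finally $f_i(p)\geq N+1$, so the Euler factor $1-p^{f_i(p)-1}\equiv 1\pmod{p^N}$ may be dropped.

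The analytic heart is a higher-order Kummer congruence. For even $k\neq 0$ and $n\equiv k\pmod{p-1}$, put $\beta_n=(1-p^{n-1})B_n/n$, which lies in $\Z_p$ by von Staudt--Clausen (as $p-1\nmid n$). Classical Kummer says the first finite difference of $\beta$ along the step $p-1$ is divisible by $p$; iterating (equivalently, via the Iwasawa-analyticity of the Kubota--Leopoldt function $L_p(s,\omega^k)$) the $m$-th finite difference is divisible by $p^m$. Converting the finite-difference expansion to a Taylor expansion in $n$, and using $v_p(m!)=0$ for $m<p$, I obtain
\[
\beta_n\equiv\sum_{m=0}^{N-1}\epsilon_m\,n^m\pmod{p^N},\qquad v_p(\epsilon_m)\geq m,
\]
valid for all $n\equiv k\pmod{p-1}$; in particular $\sum_{j\geq 0}\epsilon_j k^j\equiv\beta_k\pmod{p^N}$. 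For the exceptional class $k=0$ the quantity $\beta_n$ is not $p$-integral (the pole of $L_p(s,\mathbf 1)$), and the corresponding statement, read off from von Staudt--Clausen together with the residue $1-1/p$ of $L_p(s,\mathbf 1)$ at $s=1$, is $B_n\equiv(1-1/p)+\sum_{m\geq 1}\eta_m\,n^m\pmod{p^N}$ with $v_p(\eta_m)\geq m-1$.

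Multiplying by $f_i(p)$: for $k=f_i(1)\geq 2$ even, $B_{f_i(p)}\equiv f_i(p)\beta_{f_i(p)}\equiv\sum_{m\geq 1}\epsilon_{m-1}f_i(p)^m\pmod{p^N}$ with $v_p(\epsilon_{m-1})\geq m-1$; for even $k<0$ the same shape holds (with no main term), and for $k=0$ one gets $B_{f_i(p)}\equiv(1-1/p)+\sum_{m\geq 1}\eta_m f_i(p)^m$. Summing against $g_i(p)$ and grouping by the class $k$, I split $f_i(p)^m=(f_i(p)^m-k^{m-1}f_i(p))+k^{m-1}f_i(p)$ when $k>0$ and leave it intact when $k\leq 0$. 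The pieces $k^{m-1}f_i(p)$ recombine through $\sum_{j\geq 0}\epsilon_j k^j=\beta_k$ into the single term $(1-p^{k-1})\frac{B_k}{k}\sum_{f_i(1)=k}g_i(p)f_i(p)$, while the class $k=0$ yields $(1-1/p)\sum_{f_i(1)=0}g_i(p)$. Collecting all classes, the resulting main term is exactly the quantity in condition (1), so (1) asserts it is $\equiv g_0(p)\pmod{p^N}$. Every remaining contribution is, for some even $k$ and some $m$, a coefficient of valuation $\geq m-1$ times one of the sums $\sum_{f_i(1)=k}g_i(p)f_i(p)^m$ (for $k\leq 0$) or $\sum_{f_i(1)=k}g_i(p)(f_i(p)^m-k^{m-1}f_i(p))$ (for $k>0$), which by (2),(3) have valuation $\geq N+1-m$; hence the product has valuation $\geq N$. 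For the tail $m>N-M$ this is automatic, since $v_p(g_i(p))\geq M$ already forces valuation $\geq(m-1)+M\geq N$, which is exactly why the conditions stop at $m\leq N-M$. Assembling these estimates gives $\sum_i g_i(p)B_{f_i(p)}\equiv g_0(p)\pmod{p^N}$.

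The main obstacle is the analytic input: establishing the higher-order Kummer congruence with the sharp valuations $v_p(\epsilon_m)\geq m$ \emph{uniformly} in the large prime $p$, and in particular treating the exceptional class $k=0$, where $\beta_n$ has a pole and one must extract both the constant $1-1/p$ and the divisibility $v_p(\eta_m)\geq m-1$ from von Staudt--Clausen and the residue of $L_p(s,\mathbf 1)$. Once this uniform congruence is in hand, the rest is bookkeeping with $t$-adic versus $p$-adic valuations.
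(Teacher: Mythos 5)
Your overall architecture is exactly that of the paper: expand $B_{f_i(p)}$ as a polynomial in $f_i(p)$ whose degree-$m$ coefficient has $p$-adic valuation at least $m-1$ (the paper's $a_m(p,k)$, your $\epsilon_{m-1}$ and $\eta_m$), group terms by $k=f_i(1)$, drop the Euler factors, fold the $m=0$ contribution of the class $k=0$ and the $m=1$ contribution of the classes $k\geq 2$ into the main term of condition (1) via the identity $\sum_j\epsilon_jk^j=\beta_k$, and kill everything else using conditions (2)--(3) together with the trivial estimate for $m>N-M$. That bookkeeping, including the parity observation that odd classes vanish and the passage from $v_t$ to $v_p$ for large $p$, is correct and in places more explicit than the paper itself.

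The one genuine gap is the exceptional class $k\equiv 0\pmod{p-1}$, which you yourself flag as the main obstacle. Your route to the coefficient bounds is iterated Kummer congruences ($\Delta^m\beta$ divisible by $p^m$ along the step $p-1$) followed by Newton's forward-difference formula; this requires $\beta_n=(1-p^{n-1})B_n/n\in\Z_p$, which fails precisely when $p-1\mid n$. Von Staudt--Clausen together with the residue $1-\frac1p$ of $L_p(s,\mathbf 1)$ gives you only the constant term; it does not by itself produce $B_n\equiv(1-\frac1p)+\sum_{m\geq1}\eta_m n^m\pmod{p^N}$ with $v_p(\eta_m)\geq m-1$ uniformly in $p$. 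The paper closes exactly this hole with a single computation: Washington's explicit formula (Theorem 5.11 of \cite{Was82}) for $L_p(s,\omega_p^k)$ with $F=p(p-1)$, where expanding $\langle a\rangle^{1-s}$ as a binomial series shows $a_m(p,k)\in\frac1p\sum_{s\geq t\geq m}B_t\frac{p^s}{s!}\Z_p$, hence $v_p(a_m(p,k))\geq m-1$ for all $p\geq m+2$ and \emph{all} residue classes $k$, including $k\equiv 0$; the same bound also disposes of the infinite tail in $m$ in one stroke. If you substitute that computation for your finite-difference lemma (or supply the analogous analysis of $(s-1)L_p(s,\mathbf 1)$ near $s=1$), the rest of your argument goes through as written.
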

The verification of the conditions (1)--(3) of Theorem \ref{thmain} is a finite computation. The condition that the polynomials $f_i(t)$ are non-constant is for simplicity and is inessential: for each $f_i(t)$ that is constant, the term $g_i(p)B_{f_i(p)}$ can be moved to the right hand side of \eqref{bscth} and made part of $g_0(p)$.

\begin{remark}
It can be checked that Theorem \ref{thmain} implies both \eqref{vscpoly} and \eqref{kpoly}, so we may view Theorem \ref{thmain} as a common generalization of the von Staudt--Clausen Theorem and Kummer's congruence. The congruences \eqref{s1} and \eqref{s2} also follow from Theorem \ref{thmain}, as do many of the congruences for Bernoulli numbers given in \cite{Coh07}.
\end{remark}

The author does not know whether the converse of Theorem \ref{thmain} holds. If it does, this is likely quite difficult to prove. In particular, the converse of Theorem \ref{thmain} would imply that there exist infinitely many non-Wolstenholme primes (that is, infinitely many primes $p$ for which $p\nmid B_{p-3}$), which is an open question.

\section{The Kubota-Leopoldt $p$-adic zeta function}

The Riemann zeta function $\zeta(s)$ takes rational values at the non-positive integers, and we have a formula
\[
\zeta(-n)=-\frac{B_{n+1}}{n+1}.
\]
Fix a prime $p\geq 3$. Kummer's congruence implies that the values
\[
(1-p^{-s})\zeta(s)
\]
are $p$-adically uniformly continuous if $s$ is restricted to the negative integers in a fixed residue class modulo $p-1$. For $n\geq 2$ an integer, the \emph{Kubota-Leopoldt $p$-adic zeta value} is defined by
\[
\zeta_p(n):=\lim_{\substack{s\to n\text{ $p$-adically},\\s \leq 0\\s\equiv n\mod p-1}}(1-p^{-s})\zeta(s).
\]
The function $n\mapsto\zeta_p(n)$ is not $p$-adic analytic, but comes from $p-1$ analytic functions, one for each residue class modulo $p-1$.

The following proposition gives a bound on the valuation of the power series coefficients for the $p$-adic zeta function. It is essentially a repackaging of known results.
\begin{proposition}
\label{propser}
Let $p$ be an odd prime, $k$ an even residue class modulo $p-1$. Then there exist coefficients $a_i(p,k)\in\Q_p$ for $i=0,1,2,\ldots$ such that for every non-negative integer $n$ with $n\equiv k\mod p-1$, there is a convergent $p$-adic series identity
\begin{equation}
\label{B}
B_n=(1-p^{n-1})^{-1}\sum_{i\geq 0}a_i(p,k)n^i.
\end{equation}
The coefficients $a_i(p,k)$ satisfy the following conditions:
\begin{enumerate}
\item
\[
a_{0}(p,k)=\begin{cases}1-\frac{1}{p}&\text{ if }k\equiv 0\mod p-1,\\
0&\text{ otherwise,}\end{cases}
\]
\item for all $i$, $p$, $k$,
\[
v_p(a_{i}(p,k))\geq \frac{p-2}{p-1}i-2,
\]
\item for $p\geq i+2$ and all $k$,
\[
v_p(a_{i}(p,k))\geq i-1.
\]
\end{enumerate}
\end{proposition}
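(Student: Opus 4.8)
The plan is to realize the coefficients $a_i(p,k)$ as moments of a bounded $p$-adic measure on $\Z_p^\times$, and then extract all three conclusions from Legendre's formula $v_p(i!)=\frac{i-s_p(i)}{p-1}$ (where $s_p(i)$ is the base-$p$ digit sum) together with the elementary estimate $v_p(\log\langle x\rangle)\geq 1$ for $\langle x\rangle\in 1+p\Z_p$. First I would recall the measure-theoretic packaging of the Euler-factor-modified zeta values. Writing $x=\omega(x)\langle x\rangle$ with $\omega$ the Teichm\"uller character and $\langle x\rangle\in 1+p\Z_p$, the Kubota-Leopoldt construction (via $\zeta(1-n)=-B_n/n$ and the splitting $\int_{\Z_p^\times}=\int_{\Z_p}-\int_{p\Z_p}$, which is exactly what produces the factor $1-p^{n-1}$) yields a bounded measure $\mu$ on $\Z_p^\times$ with moments $\int_{\Z_p^\times}x^{n}\,d\mu(x)=(1-p^{n-1})B_n$ for $n\equiv k\bmod p-1$. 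Boundedness here is the standard Mazur regularization: one first gets an auxiliary analytic unit factor $1-c^{n+1}$ in the moments, and dividing it back out preserves the valuation estimates below, since the binomial coefficients give $v_p(l!)+v_p(m!)\leq v_p((l+m)!)$.

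Since $n\equiv k\bmod p-1$ we have $\omega(x)^n=\omega(x)^k$, hence $x^n=\omega(x)^k\langle x\rangle^n$. Expanding $\langle x\rangle^n=\exp(n\log\langle x\rangle)=\sum_{i\geq 0}\frac{(\log\langle x\rangle)^i}{i!}n^i$ and integrating term by term then gives the identity \eqref{B}, with
\[
a_i(p,k)=\int_{\Z_p^\times}\omega(x)^k\,\frac{(\log\langle x\rangle)^i}{i!}\,d\mu(x).
\]

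Granting this formula, the valuation bounds are immediate. As $\mu$ is bounded, its integrals satisfy $v_p\big(\int f\,d\mu\big)\geq \min_x v_p(f(x))+w$ for a fixed constant $w$; I expect $w=-1$, the single $1/p$ being forced by $a_0=1-1/p$ below. Using $v_p(\log\langle x\rangle)\geq 1$ and Legendre's formula,
\[
v_p(a_i(p,k))\geq i-\frac{i-s_p(i)}{p-1}+w=\frac{(p-2)i+s_p(i)}{p-1}+w\geq \frac{p-2}{p-1}\,i-1,
\]
which yields (2) with a unit of slack, and in particular $v_p(a_i)\to\infty$, so the series in \eqref{B} converges. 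When $p\geq i+2$ we have $i<p$, so $s_p(i)=i$, $v_p(i!)=0$, and the same estimate reads $v_p(a_i)\geq i+w\geq i-1$, giving (3).

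For (1) I would argue directly. If $k\equiv 0\bmod p-1$, then $n=0$ lies in the residue class, and substituting it into \eqref{B} with $B_0=1$ and $1-p^{0-1}=1-1/p$ forces $a_0(p,k)=1-1/p$. If $k\not\equiv 0$, then $\omega^k$ is a nontrivial character and $a_0(p,k)=\int_{\Z_p^\times}\omega(x)^k\,d\mu(x)=0$ by orthogonality; equivalently, on this branch the interpolating function has no pole, so the factor of $n$ coming from $B_n=-n\zeta(1-n)$ forces the constant term to vanish. The step I expect to be the main obstacle is the very first one: pinning down $\mu$ precisely, cleanly removing the regularization parameter $c$, and identifying the exact constant $w$ (equivalently, locating the precise $1/p$ that makes the bounds $-2$ and $-1$ rather than $0$, which is where the von Staudt-Clausen/pole information enters). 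Once the moment formula for $a_i(p,k)$ is in hand, the estimates (2), (3) and the constant-term computation (1) are routine.
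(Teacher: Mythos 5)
Your overall strategy (realize $a_i(p,k)$ as $\int_{\Z_p^\times}\omega(x)^k\frac{(\log\langle x\rangle)^i}{i!}\,d\mu$ against a bounded measure and estimate via Legendre's formula) is the measure-theoretic shadow of what the paper actually does, but the step you yourself flag as ``the main obstacle'' is not a technicality: it is where all of the quantitative content of (1)--(3) lives, and as written the argument has a genuine gap there. The standard Mazur--Stickelberger construction produces a bounded measure whose relevant moments are $(1-c^{n})(1-p^{n-1})B_n/n$, not $(1-p^{n-1})B_n$. Passing from the former to the latter requires multiplying the interpolating function by $n$ and dividing by $1-c^{n}$, and neither operation preserves ``being the moment sequence of a bounded measure'' for free: multiplication by $n$ corresponds to multiplication by $\log(1+T)/\log(1+p)$ on the Iwasawa-algebra side, whose coefficients $\pm 1/j$ are $p$-adically unbounded; and on the branch $k\equiv 0\bmod p-1$ the series $1-\omega(c)^k\langle c\rangle^{n}$ has zero constant term and linear coefficient of valuation exactly $1$, so dividing by it is precisely where the pole in (1) and a loss of a power of $p$ in (2)--(3) enter. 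Your appeal to $v_p(l!)+v_p(m!)\le v_p((l+m)!)$ concerns multiplying two series and does not address this division. Relatedly, the constant $w$ is never determined, and your numerical conclusions depend on its exact value: you in fact derive $v_p(a_i)\ge\frac{p-2}{p-1}i-1$, one power of $p$ stronger than the proposition claims, which should be read as a warning sign rather than as slack. (Also, ``$\int\omega(x)^k\,d\mu=0$ by orthogonality'' is not valid for an arbitrary measure; your alternative argument, $a_0=\lim_{n\to 0}(1-p^{n-1})B_n=\lim_{n\to 0}n\cdot\big(-L_p(1-n,\omega^k)\big)=0$ since this branch has no pole, is the correct one.)

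The paper sidesteps all of this by quoting Washington's explicit formula (Theorem 5.11) for $L_p(s,\omega_p^k)$ with $F=p(p-1)$, expanding $\langle a\rangle^{1-s}$ as a binomial series, and reading off $a_i(p,k)\in\frac{1}{p}\sum_{s\ge t\ge i}B_t\frac{p^s}{s!}\Z_p$; the specific constants $-2$ and $i-1$ then come out of the visible prefactor $\frac{1}{p(p-1)}$, von Staudt--Clausen applied to the auxiliary Bernoulli numbers $B_t$, and $v_p(s!)\le s/(p-1)$. To salvage your route you would need either to carry out the multiplication by $n$ and the division by $1-c^n$ explicitly at the level of power-series coefficients, tracking the valuation loss on the trivial branch, or to start from an explicit formula such as Washington's in which the regularization has already been removed and the constant you call $w$ is visible.
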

\begin{proof}
Let $\omega_p$ be the Teichm\"{u}ller character. For every residue class $k$ modulo $p-1$, there is a Laurent series expansion for the Kubota-Leopoldt $p$-adic $L$-function:
\[
L_p(s,\omega_p^k)=\sum_{i\geq -1}c_{i,k} (s-1)^i.
\]
The relationship between $L_p$ and Bernoulli numbers is given by
\[
-(1-p^{n-1})\frac{B_n}{n}=L_p(1-n,\omega_p^n),
\]
so we may take $a_i(p,k)=(-1)^ic_{i-1,k}$. Statement (1) of the Proposition follows form the fact that $L_p(s,\omega_p^k)$ has a simple pole of residue $1-\frac{1}{p}$ at $s=1$ if $k\equiv 0\mod p-1$, and is analytic otherwise.

To get the desired bounds on the valuation of the $a_i$, we use \cite{Was82}, Theorem 5.11 (in the case $\chi=\omega_p^k$, $f=p$, $F=p(p-1)$):
\begin{equation}
\label{firstL}
L_p(s,\omega_p^k)=\frac{1}{p(p-1)}(s-1)^{-1}\sum_{j=0}^{\infty} p^j(p-1)^j{1-s\choose j} B_j\sum_{\substack{a=1\\p\nmid a}}^{p(p-1)}\frac{\omega_p^k(a)}{a^j}\langle a\rangle^{1-s},
\end{equation}
where $\langle a\rangle := a/\omega_p(a)\equiv 1\mod p$. Write $\langle a\rangle=1+p q_a$, with $q_a\in\Z_p$. We can expand $\langle a\rangle^{s-1}$ as a binomial series to obtain
\[
L_p(s,\omega_p^k)=\frac{1}{p(p-1)}(s-1)^{-1}\sum_{i,j=0}^{\infty} p^{i+j}(p-1)^j{1-s\choose i}{1-s\choose j} B_j   \sum_{\substack{a=1\\p\nmid a}}^{p(p-1)}\frac{\omega_p^k(a)}{a^j} q_a^i.
\]
The innermost summation is $p$-integral, so we conclude that
\[
a_i(p,k)\in \frac{1}{p} \sum_{s\geq t \geq i} B_t\frac{p^s}{s!}\Z_p. 
\]
Now we have $v_p(B_t)\geq -1$ by the von Staudt--Claussen Theorem, and $v_p(s!)\leq s/(p-1)$, so
\[
v_p(a_i(p,k))\geq \frac{p-2}{p-1}i - 2,
\]
which implies statement (2) of the Proposition. Finally, if $p\geq i+2$, then
\[
\frac{p^s}{s!}B_t\in p^i\Z_p
\]
for all $s\geq t\geq i$, so in this case we have $v_p(a_i(p,k))\geq i-1$. This completes the proof.
\end{proof}

\section{Proof of the Theorem}
We start with an easy fact.
\begin{proposition}
Suppose $g(t)\in\Q(t)$ is non-zero. Then
\begin{equation}
\label{eqval}
v_p(g(p))=v_t(g(t))
\end{equation}
for all but finitely many primes $p$.
\end{proposition}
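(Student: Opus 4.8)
The plan is to reduce to the case of a polynomial with nonzero constant term by factoring out the power of $t$ recorded by $v_t$. First I would set $m=v_t(g(t))$ and write $g(t)=t^m\, P(t)/Q(t)$ with $P,Q\in\Q[t]$ satisfying $P(0)\neq 0$ and $Q(0)\neq 0$; such a factorization exists because $v_t$ is exactly the order of vanishing at $t=0$, so after extracting $t^m$ the numerator and denominator have nonvanishing constant terms. Since $g(p)=p^m\,P(p)/Q(p)$, we get $v_p(g(p))=m+v_p(P(p))-v_p(Q(p))$ whenever $Q(p)\neq 0$. Thus it suffices to prove that $v_p(P(p))=v_p(Q(p))=0$ for all but finitely many $p$, for then the two correction terms vanish and $v_p(g(p))=m=v_t(g(t))$.

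The key observation, which I would isolate as a small lemma, is that for any $R(t)\in\Q[t]$ with $R(0)\neq 0$ one has $v_p(R(p))=0$ for all but finitely many primes $p$. To prove it, write $R(t)=\sum_i r_i t^i$ and let $S$ be the finite set of primes dividing the denominator of some $r_i$, or the numerator or denominator of $r_0=R(0)$. For $p\notin S$ every coefficient $r_i$ lies in $\Z_{(p)}$, so each higher term satisfies $v_p(r_i p^i)\geq i\geq 1$ for $i\geq 1$, whence $R(p)\equiv r_0 \mod p\Z_{(p)}$. Since $p\notin S$ forces $v_p(r_0)=0$, the ultrametric inequality gives $v_p(R(p))=v_p(r_0)=0$.

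Applying this observation separately to $P$ and to $Q$ then completes the argument: for $p$ outside the finite union of the two exceptional sets $S_P\cup S_Q$ we have $v_p(P(p))=v_p(Q(p))=0$, and in particular $Q(p)\neq 0$, so $g(p)$ is defined and $v_p(g(p))=v_t(g(t))$. There is no real obstacle here; the statement is elementary, and the only point requiring care is the bookkeeping of which finitely many primes must be excluded, namely those dividing the coefficient denominators or the surviving constant terms of $P$ and $Q$.
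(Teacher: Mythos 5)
Your proof is correct and follows essentially the same route as the paper: factor out $t^{v_t(g(t))}$, reduce to a quotient of polynomials with nonzero constant term, and observe that such a polynomial evaluated at $p$ is a $p$-adic unit for all but finitely many $p$. The only cosmetic difference is that the paper normalizes the numerator and denominator to lie in $\Z[t]$, so it need only exclude primes dividing $a(0)b(0)$, whereas you keep rational coefficients and additionally exclude primes dividing coefficient denominators.
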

\begin{proof}
We can write
\[
g(t)=t^{v_t(g(t))}\frac{a(t)}{b(t)},
\]
where $a(t)$, $b(t)\in\Z[t]$ satisfy $a(0)b(0)\neq 0$. The equality \eqref{eqval} then holds for every prime not dividing $a(0)b(0)$.
\end{proof}

We are now ready to prove Theorem \ref{thmain}.
\begin{proof}[Proof of Theorem \ref{thmain}]
Suppose $f_1,\ldots,f_n$ are non-constant integer polynomials with positive leading coefficient, $g_0,\ldots,g_n$ are rational functions, and $N$ is an integer, satisfying conditions (1)--(3). We would like to prove that the quantity
\[
A(p):=g_0(p) -\sum_{i=1}^n g_i(p)B_{f_i(p)}
\]
is divisible by $p^N$ for every sufficiently large prime $p$. We can compute an expression for $A(p)$ in terms of the $a_i(p,k)$ from the previous section:
\begin{align*}
A(p)&=g_0(p)-\sum_{k\in\Z/(p-1)}\sum_{\substack{i\\f_i(p)\equiv k\, (p-1)}}g_i(p) B_{f_i(p)}\\
&=g_0(p)-\sum_{\substack{k\in\Z/(p-1)\\m\geq 0}}\lp\sum_{\substack{i\\f_i(p)\equiv k\, (p-1)}} \lp1-p^{f_i(p)-1}\rp^{-1} g_i(p)f_i(p)^m\rp a_m(p,k).
\end{align*}
We have $f_i(p)\equiv f_i(1)$ modulo $p-1$, so for every $p>\max\{f_i(1)-f_j(1)\}$,
\[
A(p)=g_0(p)-\sum_{\substack{k\in\Z\\m\geq 0}}\lp\sum_{\substack{i\\f_i(1)= k}} \lp1-p^{f_i(p)-1}\rp^{-1} g_i(p)f_i(p)^m\rp a_m(p,k).
\]
Because $f_i(t)$ is non-constant, we have $v_p(p^{f_i(p)})\to\infty$ as $p\to\infty$, so 
\begin{equation}
\label{eq1}
A(p)\equiv g_0(p)-\sum_{\substack{k\in\Z\\m\geq 0}}\lp\sum_{\substack{i\\f_i(1)= k}} g_i(p)f_i(p)^m\rp a_m(p,k)\mod p^N
\end{equation}
for every sufficiently large prime $p$.

Now, for $p$ larger than $\max\{f_i(1)\}$, we have
\[
a_0(p,k)=\begin{cases}1-\frac{1}{p}\text{ if $k=0$,}\\0\text{ otherise,}\end{cases}
\]
for every $k$ for which the inner sum in \eqref{eq1} is non-empty. This allows us to rewrite \eqref{eq1} using no terms $a_0(p,k)$. We can also eliminate the terms $a_1(p,k)$ for $k\geq 2$. By Proposition \ref{propser} we have
\[
B_k=(1-p^{k-1})^{-1}\sum_{m\geq 0}a_m(p,k)k^m,
\]
so we can solve for $a_1(p,k)$ in terms of $a_m(p,k)$, $m\geq 2$:
\[
a_1(p,k)=(1-p^{k-1})\frac{B_k}{k}-\sum_{m\geq 2}a_m(p,k) k^{m-1}.
\]
We substitute the expressions for $a_0(p,k)$ and $a_1(p,k)$ into \eqref{eq1} to obtain
\begin{gather*}
A(p)\equiv g_0(p)-\sum_{\substack{k\leq-2\text{ even}\\m\geq 1}}\sum_{\substack{i\\f_i(1)= k}} g_i(p)f_i(p)^m a_m(p,k)\\
-\lp 1-\frac{1}{p}\rp\sum_{\substack{i\\f_i(1)= 0}} g_i(p)-\sum_{m\geq 1}\sum_{\substack{i\\f_i(1)= 0}} g_i(p)f_i(p)^m a_m(p,0)\\
-\sum_{\substack{k\geq2\text{ even}}}\lp 1-p^{k-1}\rp\frac{ B_k}{k} \sum_{\substack{i\\f_i(1)= k}} g_i(p)f_i(p)\\
-\sum_{\substack{k\geq2\text{ even}\\m\geq 2}}\sum_{\substack{i\\f_i(1)= k}} g_i(p)\big(f_i(p)^m-k^{m-1}f_i(p)\big)a_m(p,k) \mod p^N.
\end{gather*}
Finally, by condition (1) of the hypothesis of the theorem,
\begin{gather*}
g_0(p)-\lp 1-\frac{1}{p}\rp\sum_{\substack{i\\f_i(1)= 0}} g_i(p)\\
-\sum_{\substack{k\geq2\text{ even}}}\lp 1-p^{k-1}\rp B_k \sum_{\substack{i\\f_i(1)= k}} g_i(p)f_i(p)\equiv 0\mod p^N.
\end{gather*}
By condition (2) of the hypothesis of the theorem, for every even $k\leq 0$ and $m\geq 1$,
\[
\sum_{\substack{i\\f_i(1)= k}} g_i(p)f_i(p)^m a_m(p,k)\equiv 0\mod p^N.
\]
By condition (3) of the hypothesis of the Theorem, for every even $k\geq 2$ and $m\geq 2$,
\[
\sum_{\substack{i\\f_i(1)= k}} g_i(p)\big(f_i(p)^m-k^{m-1}\big)a_m(p,k)\equiv 0\mod p^N.
\]
We conclude that $A(p)\equiv 0\mod p^N$ for all but finitely many $p$.
\end{proof}

\bibliographystyle{hplain}
\bibliography{jrbiblio}

\newcommand{\noop}[1]{} \def\cprime{$'$}
\begin{thebibliography}{1}

\bibitem{Coh07}
Henri Cohen.
\newblock {\em Number theory. {V}ol. {II}. {A}nalytic and modern tools}, volume
  240 of {\em Graduate Texts in Mathematics}.
\newblock Springer, New York, 2007.

\bibitem{Sun00}
Zhi-Hong Sun.
\newblock Congruences concerning {B}ernoulli numbers and {B}ernoulli
  polynomials.
\newblock {\em Discrete Appl. Math.}, 105(1-3):193--223, 2000.

\bibitem{Was82}
Lawrence~C. Washington.
\newblock {\em Introduction to cyclotomic fields}, volume~83 of {\em Graduate
  Texts in Mathematics}.
\newblock Springer-Verlag, New York, second edition, 1997.

\end{thebibliography}
\end{document}